\numberwithin{equation}{section}
\newtheorem{theorem}{Theorem}[section]
\newtheorem{lemma}[theorem]{Lemma}
\newtheorem{proposition}[theorem]{Proposition}
\theoremstyle{definition}
\newtheorem{remark}[theorem]{Remark}
\newtheorem{conjecture}[theorem]{Conjecture}
\begin{document}

%\begin{document}

\title[Euler Characteristics of 4-manifolds with 3-manifold groups]
{Minimal Euler Characteristics of 4-manifolds with 3-manifold groups}

\author{Hongbin Sun}
\address{Department of Mathematics, Rutgers University - New Brunswick, Hill Center, Busch Campus, Piscataway, NJ 08854, USA}
\email{hongbin.sun@rutgers.edu}

\author{Zhongzi Wang}
\address{Department of Mathematics Science, Tsinghua University, Beijing, 100080, CHINA}
\curraddr{School of Mathematical Sciences, Peking University, Beijing, 100871, CHINA}
\email{wangzz22@stu.pku.edu.cn}

\subjclass[2010]{Primary 57N65; Secondary 57M05, 57N13}

\keywords{3-manifolds, 4-manifolds, fundamental group,  Euler Characteristic}
\thanks{The first author is partially supported by Simons Collaboration Grant 615229.}

\begin{abstract} Let $\pi=\pi_1(M)$ for a compact 3-manifold $M$, and let $\chi_4$, $p$ and $q^*$ be the invariants of Hausmann-Weinberger,
Kotschick and Hillman respectively. 
For certain class of compact 3-manifolds $M$, including all those not containing two-sided $RP^2$,
we determine $\chi_4(\pi)$. We address when  does $p(\pi)=\chi_4(\pi)$, when does $q^*(\pi)=\chi_4(\pi)$, and  answer a question raised by Hillman. \end{abstract}

\date{} 
\maketitle 

\tableofcontents
\section{Introduction} 
For a CW-complex $X$, we use $H_i(X)$, $H^i(X)$, and $\beta_i(X)$ (resp. $H_i(X; \mathbb{Z}_2)$, $H^i(X;\mathbb{Z}_2)$, and $\beta_i(X; \mathbb{Z}_2)$)
to denote its $i$-th homology group, $i$-th cohomology group and  $i$-th Betti number with real coefficient (resp. $\mathbb{Z}_2$ coefficient). 
For a finitely presented group $G$, denote $\beta_i(G)$ ($\beta_i(G; \mathbb{Z}_2)$) to be the $i$-th betti number of $K(G, 1)$, 
the classifying space of $G$. 
Denote $\chi(X)$ to be the Euler characteristic of a finite CW complex $X$, and denote $\sigma(X)$ to be the signature of a closed  oriented 4-manifold $X$. In this paper, by $4$-manifolds, we mean topological $4$-manifolds, although all $4$-manifolds we will construct are smooth manifolds.

In 1985, Hausmann and  Weinberger \cite{HW} introduced the 4-manifold Euler characteristic  for a finitely presented group $G$, defined by  $$\chi_4(G)=\text{inf}\{\chi(X)\ |\ \text{$X$ is a closed orientable 4-manifold and $\pi_1(X)\cong G$}\}.$$ 

There are also some variations of  $\chi_4(G)$. 
In 1994, Kotschick  \cite{Ko1} introduced 
$$p(G)=\text{inf}\{\chi(X)-|\sigma(X)|\ |\ \text{$X$ is a closed orientable 4-manifold and $\pi_1(X)\cong G$}\}.$$
In  2002, Hillman \cite{Hi} introduced
$$q^*(G)=\text{inf}\{\chi(X)\ |\ \text{$X$ is a closed 4-manifold and $\pi_1(X)\cong G$}\}.$$ 
It is known that 
\begin{align}\label{1.1}
\chi_4(G) \ge p(G),  \,\,\, \chi_4(G) \ge q^*(G).
\end{align}

Note both $\chi_4(G)$ and $q^*(G)$ are denoted by $q(G)$ in their original definitions,  see \cite{HW} and \cite{Hi}.
Moreover, $q^*(G)$ is originally defined for $PD_4$-complexes in \cite{Hi}.
%, and Theorem \ref{main1.4} is also valid when $q^*$ is defined for $PD_4$-complexes, see Remark \ref{last} (2). 

Kotschick made
 a useful observation to estimate the lower bound of $\chi_4(G)$ and $p(G)$ when $\beta_4(G)=0$ 
 \cite[Theorems 2.8 and 4.2]{Ko1}. This observation is crucial for our works in this article (c.f. the approach in Section \ref{secondhalf2}).

More variations  and generalizations  of $\chi_4(G)$ can be found in \cite{Hi}, \cite{Ko2}, \cite{BK}, \cite{KL} and \cite{AH}. 

An important family of finitely presented groups which can be classified  are groups of compact 3-manifolds, which include all cyclic groups,  free groups, surface groups and knot groups. Many studies have been made for 4-manifolds with 3-manifold groups, see \cite{JK}, \cite{Ko1}, \cite{Hi}, \cite{KL}, \cite{KLPT} and the references therein.

We survey known results on above invariants of 3-manifold groups in the following theorem.

{\bf Theorem 1.0} {\it Suppose $M$ is a compact 3-manifold and $\pi_1(M)=\pi$.

(1) $\chi_4(\pi)=p(\pi)= 2$ if $M$ is closed, orientable and aspherical,  \cite[Proposition 5.6]{Ko1}.

(2) $\chi_4(\pi)=2-2q$ if $M$ is closed and orientable, where $q$ is the maximal rank of free group in the free product decomposition of $\pi$,  \cite[page 61-62]{Hi} and \cite[Theorem 3.3]{KL}.

(3) $\chi_4(\pi)=0$ if $\pi$ is the group of a knot complement in the 3-sphere $S^3$, \cite[Corollary 3.12.3]{Hi} and \cite[Theorem 3.4]{KL}. 

(4) Suppose $M$ is a closed aspherical 3-manifold and $\pi_1(X)=\pi$ for a closed 4-manifold
$X$. If $X$ and $M$ have the same orientability, then $\chi(X)>0$ and 
$q^*(\pi)\in \{1,2\}$, \cite[Theorem 3.13 and Corollary 3.13.1]{Hi}. }

In \cite[page 63]{Hi}, Hillman asked 
whether results in (4) above can be extended to all closed 3-manifold groups without torsion and without free $\mathbb{Z}$-factors.

In this paper, we try to determine $\chi_4(\pi)$ for fundamental groups of compact  3-manifolds. We also get some results related to
$p(\pi)$ and $q^*(\pi)$.  It turns out that, for non-orientable 3-manifolds,
the problem becomes  more difficult and some new approaches are needed.

For undefined terminologies, see \cite{He} and \cite{Ha2} about 3-manifolds, see \cite{Kir} about 4-manifolds, and see
 \cite{Ha1} about  algebraic topology.

The Kneser-Milnor theorem claims that each compact 3-manifold has a prime decomposition, whose prime factors
are unique up to homeomorphism (up to possibly replacing $S^2\times S^1$ by $S^2\tilde{\times}S^1$) and permutation (\cite{He}).
So $M$ has a  prime
decomposition 
\begin{align}\label{1.2}
M=(\#_{i=1}^mM_i)\#(\#_{j=1}^n N_j)\#(\#_{l=1}^p Q_l)\#(\#_{e=1}^q S_e).
\end{align}
Here each prime factor may or may not be orientable and belongs to one of the following categories. %(see Section \ref{decomposition} for more detail):
\begin{enumerate}[(i)]
\item each $M_i$ is a closed prime $3$-manifold with $|\pi_1(M_i)|=\infty$ and is not an $S^2$- or $RP^2$-bundle over $S^1$;
\item  each $N_j$ is a closed prime $3$-manifold with $|\pi_1(N_j)|<\infty$;
\item  each $Q_l$ is a prime $3$-manifold and $\partial Q_l$ is non-empty;
\item  each $S_e$ is an $S^2$- or $RP^2$-bundle over $S^1$.  
\end{enumerate}
Note in (ii) each $N_j$  is orientable, and in (iv) each $RP^2$-bundle over $S^1$ is homeomorphic to $RP^2\times S^1$.
Also each orientable 3-manifold contains no embedded 2-sided $RP^2$, and each 3-manifold containing embedded $RP^2$ has 2-torsion in its fundamental group.

We make the following conjecture about $\chi_4(\pi)$ and  $p(\pi)$.

\begin{conjecture}\label{conj}
Suppose $M$ is a compact 3-manifold with prime decomposition
described  as in (\ref{1.2}) and (i)-(iv).
Let $\pi= \pi_1(M)$, then 
$$ \chi_4(\pi)=  p(\pi)=2-2(p+q)+\chi(\partial M).$$
\end{conjecture}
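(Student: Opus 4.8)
The plan is to prove the two inequalities $p(\pi)\ge 2-2(p+q)+\chi(\partial M)$ and $\chi_4(\pi)\le 2-2(p+q)+\chi(\partial M)$ and then invoke $\chi_4(\pi)\ge p(\pi)$ from (1.0): since every orientable $X$ I build will have $\sigma(X)=0$, its value $\chi(X)-|\sigma(X)|=\chi(X)$ bounds both $\chi_4$ and $p$ from above, so the chain $V\le p(\pi)\le\chi_4(\pi)\le V$ collapses to equality, where $V:=2-2(p+q)+\chi(\partial M)$. The whole argument reduces to a Betti-number computation for $\pi$ and a family of explicit $4$-manifold constructions. Because $\pi$ is the free product of the fundamental groups of the prime factors, a $K(\pi,1)$ is the wedge of the factorwise classifying spaces, so reduced real Betti numbers are additive over the factors. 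First I would record that $\beta_4(\pi)=0$: each $M_i$ and each $Q_l$ is aspherical of dimension $\le 3$ (homotopy equivalent to a $2$-complex in the bounded case), each $N_j$ has finite $\pi_1$ and hence vanishing positive-degree real homology, and each $S_e$ has $K(\pi_1,1)$ homotopy equivalent to $S^1$ or to $S^1\times RP^\infty$; in all cases $\beta_4=0$.

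Granted $\beta_4(\pi)=0$, the lower bound is Kotschick's observation \cite[Theorems 2.8 and 4.2]{Ko1}. For a closed oriented $X$ with $\pi_1(X)=\pi$, the classifying map $c:X\to K(\pi,1)$ satisfies $c^*(\alpha)\cup c^*(\beta)=c^*(\alpha\cup\beta)\in c^*H^4(\pi;\mathbb{R})=0$, so $V_0:=\mathrm{im}(c^*:H^2(\pi;\mathbb{R})\to H^2(X;\mathbb{R}))$ is isotropic for the intersection form; as $c_*$ is surjective on $H_2$, we get $\dim V_0=\beta_2(\pi)$, hence $\min(b_2^+,b_2^-)\ge\beta_2(\pi)$ and $\chi(X)-|\sigma(X)|=2-2\beta_1(\pi)+2\min(b_2^+,b_2^-)\ge 2-2\beta_1(\pi)+2\beta_2(\pi)$. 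I would then evaluate the right-hand side by summing the factorwise contributions to $-2\beta_1+2\beta_2$ via Poincaré--Lefschetz duality: each orientable closed $M_i$ and each finite $N_j$ contributes $0$; each $S_e$ contributes $-2$ (since $\beta_1=1,\beta_2=0$); and each $Q_l$ contributes $\chi(\partial Q_l)-2$ (from $2\chi(Q_l)=\chi(\partial Q_l)$ together with $\beta_{\ge 3}=0$). Since $\partial M=\bigsqcup_l\partial Q_l$, the total equals $V$, establishing $p(\pi)\ge V$ in the orientable-factor case.

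For the upper bound I would realize $\pi$ by connect-summing one model $4$-manifold per prime factor, using $\chi(X\#Y)=\chi(X)+\chi(Y)-2$ and $\pi_1(X\#Y)=\pi_1(X)*\pi_1(Y)$. The models are: for any closed orientable $3$-manifold factor, surger the circle $\{pt\}\times S^1\subset M\times S^1$ to kill the extra $\mathbb{Z}$, producing an orientable $X$ with $\pi_1=\pi_1(M)$, $\sigma=0$ and $\chi=2$ (this covers both $M_i$ and $N_j$); for an $S^2$-bundle factor take $S^1\times S^3$ with $\chi=0$; for $RP^2\times S^1$ take $RP^3\times S^1$, which is orientable with $\chi=0$, $\sigma=0$ and $\pi_1=\mathbb{Z}\oplus\mathbb{Z}/2$; and for a bounded factor cap off and surger via $X_{Q_l}=(Q_l\times S^1)\cup_{\partial Q_l\times S^1}(\partial Q_l\times D^2)$, which has $\pi_1=\pi_1(Q_l)$ and $\chi=\chi(\partial Q_l)$. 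Bookkeeping the connect-sum formula over the $m+n+p+q$ pieces yields a closed orientable $X$ with $\chi(X)=V$ and $\sigma(X)=0$, giving $\chi_4(\pi)\le V$ and $p(\pi)\le V$.

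The hard part is non-orientability, which is exactly where the conjecture goes beyond the clean orientable-plus-Kotschick case. On the construction side, $M\times S^1$ and $Q_l\times S^1$ become non-orientable as soon as the factor is, so the models above must be replaced by genuinely orientable $4$-manifolds (for instance suitable mapping tori or twisted doubles) realizing a non-orientable $3$-manifold group with the prescribed $\chi$ and with $\sigma=0$; moreover a two-sided $RP^2$ (the $RP^2\times S^1$ summands and any $RP^2$ inside a $Q_l$) introduces $2$-torsion that obstructs the naive circle surgery. On the lower-bound side, the Kotschick estimate is not sharp for a non-orientable closed aspherical $M_i$: there $\beta_3=0$ forces $\beta_2(\pi)=\beta_1(\pi)-1$, so such a factor contributes $-2$ rather than $0$ and the bound undershoots $V$ by $2$ per factor. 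Recovering the missing $2$ appears to require refining the cup-product argument with the orientation character $w_1\in H^1(\pi;\mathbb{Z}/2)$ and twisted real coefficients, so that the twisted fundamental class of $M_i$ forces additional indefinite summands in the intersection form of any orientable $X$. I expect controlling these torsion and twisted-duality phenomena to be the main obstacle, matching the paper's remark that the non-orientable setting demands new approaches.
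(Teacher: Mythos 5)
You were asked to prove Conjecture \ref{conj}, and the first thing to say is that the paper itself does not prove it: it is stated as an open conjecture, with Theorems \ref{main1.2}, \ref{main1.3} and \ref{main1.4} offered only as evidence. Your proposal, accordingly, cannot be fully correct, and to your credit you essentially admit this in the last paragraph. What you do establish (modulo the repairs below) is the case where every closed prime factor $M_i$ is orientable, and by the same route as the paper's Theorem \ref{main1.3}: your upper-bound models coincide with the paper's Proposition \ref{main1} (your surgery on $\{pt\}\times S^1\subset P\times S^1$ is exactly $(\breve P\times S^1)\cup_{S^2\times S^1}(S^2\times D^2)$, and your $Q_l\times S^1$ cappings, $S^3\times S^1$ and $RP^3\times S^1$ are formula (4.1)), while your lower bound is Kotschick's estimate (Proposition \ref{main2}) fed by the Betti-number computations of Propositions \ref{kpi1} and \ref{m1}. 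Your diagnosis of the obstruction is also exactly the paper's: a closed non-orientable aspherical factor has $\beta_2=\beta_1-1$, so the Kotschick bound yields $2-2(k+p+q)+\chi(\partial M)$, short by $2k$ --- this is equation (6.7). But your proposed remedy (refining the cup-product argument with $w_1$-twisted real coefficients) is not carried out, and the paper has no analogue of it for $p(\pi)$: its substitute, Proposition \ref{Lefschetz}, applies the Lefschetz fixed-point theorem to the orientation double cover $\tilde X\to X$ and recovers the full value only for $\chi_4$ (Theorem \ref{main1.2}), not for $p$, and even then only under the no-two-sided-$RP^2$ hypothesis on the $M_i$ and $Q_l$. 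So the $p(\pi)$ equality with non-orientable closed factors remains open, in your write-up as in the paper.

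Two concrete errors occur inside the part you do claim. First, your assertion that ``each $M_i$ and each $Q_l$ is aspherical'' is false when a prime factor contains a two-sided $RP^2$: for instance $Q_l=RP^2\times I$ is prime, has boundary with no sphere components, and has $\pi_2\cong\pi_2(S^2\times I)\cong\mathbb{Z}\neq 0$; Remark 1.5 stresses that such $Q_l$ (and, for Theorem \ref{main1.2}, the $S_e$) are allowed. The conclusions you need --- $\beta_4(\pi)=0$ and additivity of $\beta_2$ over prime factors --- are still true, but proving them for factors with interior two-sided $RP^2$'s is precisely the content of the paper's Sections \ref{decomposition} and \ref{KG1}: the $RP^2$-decomposition (Theorem \ref{RP2decomposition}), the Meeks--Simon--Yau equivariant sphere theorem argument (Lemma \ref{ir}), and building $K(\pi,1)$ by attaching copies of $RP^\infty$ along the $RP^2$'s; your wedge-of-aspherical-pieces shortcut skips all of this. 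Second, non-orientable bounded factors $Q_l$ occur already in the case you claim to settle (only the closed $M_i$ are assumed orientable), and there $Q_l\times S^1$ is non-orientable; the phrase ``suitable mapping tori or twisted doubles'' must be replaced by the paper's explicit orientable twisted bundle $(\tilde Q\times S^1)/(\tau\times r)$ capped with (possibly twisted) disk bundles, i.e.\ Case 2 of the construction in Proposition \ref{main1}. With those two repairs your argument becomes a correct proof of Theorem \ref{main1.3}; without them even that special case has holes, and in neither event is Conjecture \ref{conj} proved.
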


If Conjecture \ref{conj} is true, it implies that any closed orientable 4-manifold $X$ realizing $\chi_4(\pi)$ has zero signature.

We state our works on $\chi_4(\pi)$ and related invariants in the following three theorems.
Theorem \ref{main1.2} is the main result of this paper and is more difficult to prove.  
Theorems \ref{main1.2},  \ref{main1.3} and  Remark \ref{last} (1)  support  Conjecture \ref{conj}.

\begin{theorem}\label{main1.2}
Suppose $M$ is a compact 3-manifold with prime decomposition
described  as in (\ref{1.2}) and (i)-(iv).
Let $\pi= \pi_1(M)$, if each $M_i$ in (i) and each $Q_l$ in (iii)  contains no two-sided $RP^2$, then
\begin{align}\label{1.3}
\chi_4(\pi)=  2-2(p+q)+\chi(\partial M).
\end{align}

In particular, (\ref{1.3}) holds when $M$ contains no two-sided $RP^2$.
\end{theorem}

\begin{theorem}\label{main1.3}
Suppose $M$ is a compact 3-manifold with prime decomposition
described  as in (\ref{1.2}) and (i)-(iv).
Let $\pi= \pi_1(M)$, if each closed 3-manifold $M_i$ in (i) is orientable, then 
\begin{align}\label{1.4}
p(\pi) = \chi_4(\pi)=  2-2(p+q)+\chi(\partial M).
\end{align}

In particular, (\ref{1.4}) holds when $M$ is orientable.
\end{theorem}
 
The third result confirms the question asked by Hillman above, and in fact we prove a stronger result.
\begin{theorem}\label{main1.4}
Suppose $M$ is a compact 3-manifold with prime decomposition
described  as in (\ref{1.2}) and (i)-(iv). Let $\pi=\pi_1(M)$, if $\pi$ contains no $2$-torsions, 
then 
\begin{align}\label{1.5}
q^*(\pi)=  \chi_4(\pi)=  2-2(p+q)+\chi(\partial M).
\end{align}
In particular, if $p=q=0$, then
$q^*(\pi)=2.$
\end{theorem}

\begin{remark}\label{last} (1) Let $M$ be a closed, irreducible and non-orientable 3-manifold and  $M\neq RP^2\times S^1$.
Let $r$ the number of disjoint non-parallel 2-sided $RP^2$'s in $M$. Then $\chi_4(\pi_1(M))=2$ if $r=0$ by Theorem \ref{main1.2}, and
\begin{align}\label{1.6}
\chi_4(\pi_1(M))\in \{0, 1, 2\} \  \ \text{if} \ \ r>0 
\end{align}
by the proof of Theorem \ref{main1.3} (Proposition \ref{b}) since $p=q=\chi(\partial M)=0$ and the number of non-orientable summands $k$ is $1$ in this case. Equation (\ref{1.6}) suggests  that $\chi_4(\pi_1(M))$ should be independent of the number 
of 2-sided $RP^2$'s in $M$, and to determine whether $\chi_4(\pi_1(M))=2$ holds is the first step to verify the conjecture.

(2) By Theorem \ref{main1.4}, if  $\pi_1(M)$ contains no $2$-torsion, then $q^*(\pi)$ can be realized by closed orientable 4-manifolds. 
Note $q^*(\mathbb{Z}_2)=1$ is realized by $RP^4$. So the equality $ q^*(\pi)=  \chi_4(\pi)$ in Theorem \ref{main1.4} is not true for all 3-manifold groups (since $\chi_4(\mathbb{Z}_2)=2$ by Theorem \ref{main1.2}). 
\end{remark} 

The organization  of the paper is reflected by the table of the content.
%as following:  In Section 2, Proposition \ref{main1} provides the upper bound of $\chi_4(\pi)$.
%Theorems  \ref{main1.2}, \ref{main1.3} and \ref{main1.4} are  proved in Sections \ref{secondhalf2},  \ref{secondhalf1} and \ref{wo2torsion} respectively.
%The Appendix  calculates   the Betti numbers of compact $3$-manifold groups. 
%In Section \ref{intersectionform}, we study intersection forms of closed orientable 4-manifolds realizing $\chi_4(\pi)$ for a 3-manifold group $\pi$.

\section{An upper bound of $\chi_4(\pi)$ and computations of $\beta_i(\pi)$}\label{firsthalf}

We first list several standard facts which will be repeatedly used in our proofs.%\cite[Exercises 20, 21, 22, p157]{Ha1}:

\begin{lemma}\label{standard} Suppose  $N_1$ and $N_2$ are compact n-manifolds.

(i) $\pi_1(N_1\# N_2)=\pi_1(N_1)*\pi_1(N_2)$ for $n\ge 3$, 

%(ii) If $X$ and $Y$ are finite CW complexes, then $\chi(X\times Y)=\chi(X)\times \chi(Y)$.

%(iii) If a finite CW complex $X$ is a union of sub-complexes $A$ and $B$, then  $\chi(X)=\chi(A)+\chi(B)-\chi(A\cap B)$. 

(ii) $\chi(N_1\#N_1)=\chi(N_1)+\chi(N_2)-2$ for $n=4$

(iii) If $p: N_1\to N_2$ is a covering map of degree $p$, then $\chi(N_1)=p\chi(N_2)$.
\end{lemma}

An upper bound of $\chi_4(\pi)$ in Theorems \ref{main1.2}, \ref{main1.3},  \ref{main1.4} is given in the following.

\begin{proposition}\label{main1}
Suppose $M$ is a compact 3-manifold with a prime decomposition
described  as in (\ref{1.2}) and (i)-(iv).
Let $\pi= \pi_1(M)$, then 
$$\chi_4(\pi)\le 2-2(p+q)+\chi(\partial M).$$ 
\end{proposition}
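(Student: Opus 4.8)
The goal is to construct, for each prime decomposition piece, a closed orientable $4$-manifold whose fundamental group is built up to match $\pi$, and whose Euler characteristics add to give the claimed bound. The plan is to exploit the fact that Euler characteristic behaves additively under connected sum (with a correction of $-2$ per connected sum) and that $\chi_4$ of a free product relates to the $\chi_4$ of the factors. So first I would set up the reduction: since $\pi = \pi_1(M)$ decomposes as a free product corresponding to the prime decomposition $(1.1)$, and $\chi_4$ of a free product $G_1 * G_2$ satisfies $\chi_4(G_1 * G_2) \le \chi_4(G_1) + \chi_4(G_2) - 2$ (realized by connected-summing $4$-manifolds and noting $\pi_1$ of a connected sum is the free product), I would reduce the problem to producing a good $4$-manifold for each individual prime factor $M_i$, $N_j$, $Q_l$, $S_e$, and then assembling.

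\textbf{The per-factor constructions.} For each type of prime factor I would build an explicit closed orientable $4$-manifold realizing the correct fundamental group with controlled Euler characteristic. For a closed orientable aspherical factor $M_i$, a natural candidate is the double $DM_i$ of $M_i \times [0,1]$, or rather the mapping-torus/product type construction; more directly, one takes a closed orientable $4$-manifold with $\pi_1 = \pi_1(M_i)$ and $\chi = 2$ (e.g. a suitable $S^1$-bundle or surgery on $M_i \times S^1$), since for aspherical $3$-manifolds one expects to hit $\chi = 2$. For factors $Q_l$ with nonempty boundary, the double $DQ_l = Q_l \cup_{\partial} Q_l$ is a closed $3$-manifold, and one wants a $4$-manifold with $\pi_1 = \pi_1(Q_l)$; here the boundary contributes through $\chi(\partial M)$, and the relevant building block should have Euler characteristic reflecting $-2 + \chi(\partial Q_l)$ after the bookkeeping. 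For the $S^2$- and $RP^2$-bundle factors $S_e$ (contributing the $-2q$), the fundamental group is $\mathbb{Z}$ or $\mathbb{Z} \times \mathbb{Z}_2$, realized cheaply by $S^1 \times S^3$ (giving $\chi = 0$) or an orientable analogue, each lowering the running total. The cleanest uniform approach is to take $M \times S^1$ as a starting point — a closed orientable $4$-manifold with $\pi_1 = \pi \times \mathbb{Z}$ and $\chi = 0$ — but this has the wrong group, so instead I would use surgery on $M \times S^1$ along a loop representing the $S^1$-factor, or build directly by handle attachment to realize $\pi$ exactly while tracking how each handle changes $\chi$.

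\textbf{The Euler characteristic bookkeeping.} Once the building blocks are chosen, the main computation is additive. Writing $X = X_1 \# \cdots \# X_r$ (one summand per prime factor, where $r = m+n+p+q$), one has $\chi(X) = \sum \chi(X_i) - 2(r-1)$, and $\pi_1(X)$ is the free product, matching $\pi$. The target $2 - 2(p+q) + \chi(\partial M)$ must emerge: the $2$ is the base, each $S_e$ factor drops the count by $2$, each $Q_l$ with boundary contributes both a $-2$ and a boundary term $\chi(\partial Q_l)$, and since $\chi(\partial M) = \sum_l \chi(\partial Q_l)$ this reassembles correctly. The orientability requirement is automatic because connected sums of orientable $4$-manifolds are orientable, so even when $M$ itself is non-orientable the witness $X$ can be taken orientable.

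\textbf{Expected main obstacle.} The hard part will be the non-orientable and $RP^2$-containing factors, and more precisely constructing an \emph{orientable} closed $4$-manifold whose fundamental group is exactly $\pi_1$ of a non-orientable prime piece (where $\pi_1$ may carry $2$-torsion and orientation-reversing elements) while keeping $\chi$ at the minimal value $0$ or $2$ dictated by the formula. Realizing the exact group without spending extra Euler characteristic on unwanted $\pi_2$ or on handles that raise $\beta_2$ is delicate; the natural construction $M \times S^1$ followed by surgery to kill the extra $\mathbb{Z}$ must be done so that the surgery neither changes $\pi_1$ incorrectly nor inflates $\chi$. I would handle this by choosing an embedded loop generating the $S^1$ direction, performing $0$-framed (or appropriately framed) surgery, and verifying via a Mayer–Vietoris or van Kampen computation that $\pi_1$ is preserved and $\chi$ changes by exactly the amount the formula predicts, appealing to the Betti-number identities of Proposition~\ref{kpi1} to control the homology of the resulting space.
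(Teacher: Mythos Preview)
Your overall strategy---build a closed orientable $4$-manifold for each prime factor and assemble by connected sum, keeping track of the $-2$ correction per sum---is exactly the paper's, and your bookkeeping is essentially right. For an \emph{orientable} piece $N$ (closed or with boundary) your surgery idea is in fact equivalent to the paper's construction: the paper forms $N\times S^1$ and caps each boundary component $S_i\times S^1$ with $S_i\times D^2$ (for closed $N$, first puncture to create an $S^2$ boundary). This is the same as doing surgery on the $S^1$-fibre, and a van Kampen check gives $\pi_1=\pi_1(N)$ with $\chi(N^*)=\chi(\partial N)$ (or $\chi=2$ in the closed case). For the $RP^2\times S^1$ factors the paper simply uses $RP^3\times S^1$, which is orientable with $\chi=0$.

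The genuine gap is the non-orientable prime factors $M_i$ and $Q_l$, which you correctly flag but do not resolve. Your proposed fix---surgery on $M\times S^1$ along the $S^1$ direction---cannot work: if $M$ is non-orientable then $M\times S^1$ is non-orientable, and surgery along a circle does not change orientability. So you never obtain an orientable $4$-manifold to serve as the building block, and your appeal to Proposition~\ref{kpi1} does not help here (that proposition controls Betti numbers of $K(\pi,1)$ and is used only for the \emph{lower} bound arguments). The paper's missing idea is the \emph{twisted} product: take the orientable double cover $\tilde M\to M$ with deck involution $\tau$, let $r:S^1\to S^1$ be reflection, and form $M\tilde\times S^1 := (\tilde M\times S^1)/(\tau\times r)$. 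Since $\tau$ reverses orientation on $\tilde M$ and $r$ reverses orientation on $S^1$, the involution $\tau\times r$ is orientation-preserving on $\tilde M\times S^1$, so the quotient is orientable; one then caps off each boundary component with $S_j\times D^2$ or the twisted version $S_j\tilde\times D^2$ as appropriate. This construction is what yields an orientable $4$-manifold with $\pi_1=\pi_1(M)$ and the correct Euler characteristic in the non-orientable case, and without it your argument does not go through.
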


For a compact 3-manifold $M$
associated with decomposition (\ref{1.2}) and (i)-(iv),  to prove Proposition \ref{main1}, we only need to construct a closed orientable 4-manifold $M^*$   with $\pi_1(M^*)=  \pi_1(M)$ and 
$$\chi(M^*)=2-2(p+q)+\chi(\partial M).$$

{\it Construction  of $M^*$:} Suppose $M$ is a compact 3-manifold and $\partial M$ has $k$ components $\{S_1, ..., S_k\}$.

Case 1: $M$ is orientable and $\partial M\ne \emptyset$.  Let 
$$M^{*}=(M\times S^1) \cup (\cup_i S_i\times D^2),$$
where each component $S_i\times S^1$ of $\partial (M\times S^1)$ is identified with $\partial (S_i\times D^2)=S_i\times S^1$ canonically.
By Van Kampen theorem, we can verify that $\pi_1(M^{*})=\pi_1(M).$

Case  2: $M$ is   non-orientable and $\partial M\ne \emptyset$.  
Let $p: \tilde M\to M$ be the orientable double cover of $M$, with a fixed-point free orientation reversing involution
$\tau : \tilde M\to \tilde M$ such that $\tilde M/\tau=M$.
Let $r: S^1\to S^1$ be an orientation reversing involution on $S^1$.
Now we have an orientation preserving fixed-point free involution $\tau\times r: \tilde M\times S^1\to \tilde M\times S^1.$
Then $\tilde M\times S^1/\tau\times r$ is an orientable 4-manifold, which indeed is a twisted product $M\tilde \times S^1$.
Each boundary component of $M\tilde \times S^1$ is either  $S_j\times S^1$ if $S_j$ is orientable, or $S_j\tilde \times S^1$ otherwise. Then we close these components canonically by $S_j\times D^2$ or $S_j\tilde \times D^2$, depending on $S_j$ is orientable or not.
Again we get a closed orientable 4-manifold $M^{*} $ with $\pi_1(M^{*})=\pi_1(M).$

Case 3: $M$ is closed. let $B^3$ be a 3-ball in $M$, we denote by $\breve M=M\setminus \text{int} B^3$.
Then $\partial \breve M=S^2$, and  $\breve M^*$ is defined.

{\it Final Construction:} For a compact 3-manifold $M$ with prime decomposition given in (\ref{1.2}), denote the connected sum of closed 3-manifold pieces in (i) and (ii) by $P=(\#_{i=1}^mM_i)\#(\#_{j=1}^n N_j)$. We suppose that there are $q_1$ prime factors that are $S^2$-bundles and there are $q_2$ prime factors that are $RP^2$-bundles, with $q_1+q_2=q$. Then we define 
\begin{align}\label{2.1}
M^*= \breve {P}^{*}\# (\#_{l=1}^p Q_l^{*})\#(\#_{e=1}^{q_1}(S^3\times S^1))\#(\#_{f=1}^{q_2}(RP^3\times S^1)),
\end{align}
which is a closed orientable 4-manifold. 
  
By Lemma \ref{standard} it is easily shown  that
$$\pi_1(M^*)\cong \pi_1(M)\ \ \text{and} \ \  \chi(M^*)= 2-2(p+q)+\chi(\partial M).$$
Then Proposition \ref{main1} follows.

The proofs of Theorems \ref{main1.2}, \ref{main1.3} and \ref{main1.4} use the following proposition. 

\begin{proposition}\label{kpi1}
Let $M$ be a compact $3$-manifold, and let its prime decomposition be
$$M=(\#_{i=1}^mM_i)\#(\#_{j=1}^n N_j)\#(\#_{l=1}^p Q_l)\#(\#_{e=1}^q S_e)$$ as in (\ref{1.2}) and (i)-(iv).
%Here each $M_i$ is a closed prime $3$-manifold with $|\pi(M_i)|=\infty$ and is not an $S^2$- or $RP^2$-bundle, each $N_j$ is closed and has finite fundamental group,
%each $Q_l$ is prime and has non-empty  $\partial Q_l$, each $S_e$ is an $S^2$- or $RP^2$-bundle over $S^1$.  
Let $\pi= \pi_1(M)$,  then we have

(1) $\beta_4(\pi) =0$;

(2) $\beta_2(\pi) =\Sigma_{i=1}^m \beta_2(M_i)+\Sigma_{l=1}^p \beta_2(Q_l).$

Moreover, if $\pi$ contains no $2$-torsion, then 
 
(3) $\beta_4(\pi; \mathbb{Z}_2) =0$;

(4) $\beta_2(\pi; \mathbb{Z}_2) =\Sigma_{i=1}^m \beta_2(M_i; \mathbb{Z}_2)+\Sigma_{l=1}^p \beta_2(Q_l; \mathbb{Z}_2).$
\end{proposition}

The following result is used to prove Proposition \ref{kpi1} and Proposition \ref{Lefschetz}.

\begin{proposition}\label{Hatcher}  \cite[Proposition 3G.1]{Ha1}
Let $p: \tilde X \to X$ be a finite regular covering with deck transformation group $\Gamma$. Then

(1) $p^*: H^k(X)\to H^k(\tilde X)$ is injective for each integer $k$.

(2) the image of  $p^*$  is the subspace $H^k(\tilde X)^\Gamma$ consisting of elements fixed by all $\gamma\in \Gamma$.

\end{proposition}

To prove Proposition \ref{kpi1}, we will first work on prime 3-manifolds. 
\begin{proposition}\label{kpi1prime}
Let $M$ be a prime $3$-manifold that is not an $S^2$- or $RP^2$-bundle over $S^1$, and let $\pi=\pi_1(M)$. Then we have
$\beta_2(\pi)=\beta_2(M)$.
\end{proposition}

\begin{proof}
{\bf Case I.} $M$ is an orientable $3$-manifold. 

If $M$ has finite fundamental group, since no boundary component of $M$ is $S^2$, $M$ must be closed. Then the result holds since $\beta_2(M)=0$ (by Poincare duality) and any finite group has zero betti numbers in all positive dimensions (for example, see Theorem 6.5.8 of \cite{We}).

If $M$ has infinite fundamental group, since $M$ is not an $S^2$-bundle, it is an irreducible $3$-manifold. By the sphere theorem and the Hurewicz theorem, $M$ is aspherical and it is a model of $K(\pi,1)$. So %$\beta_4(K(\pi,1)) =\beta_4(M)=0$ and 
$\beta_2(\pi)=\beta_2(M)$ holds.

{\bf Case II.} $M$ is a non-orientable $3$-manifold.

Let $i:M \to X=K(\pi,1)$ be the inclusion that induces an isomorphism on $\pi_1$. 
Let $p_M: \tilde M\to M$ be the orientable double cover with $\pi_1(\tilde M) =\tilde \pi< \pi$, 
 and $p_X: \tilde X=K(\tilde{\pi},1)\to X=K(\pi,1)$ be the double cover corresponding to $\tilde{\pi }<\pi$. We get the following commutative diagrams:
\begin{diagram}
\tilde{M} & \rTo^{\tilde i} & \tilde X  \ \  \ \  \ \ \tilde{M} & \rTo^{\tau_M} & \tilde M                     \\     
\dTo^{p_M} & & \dTo^{p_X} \ \  \ \  \ \ \dTo^{\tilde i} & & \dTo^{\tilde i} \\
M& \rTo^i & X, \ \  \ \ \ \  \tilde X& \rTo^{\tau_X} & \tilde X,
\end{diagram}
where  $\tau_M:  \tilde M\to \tilde M$ and $\tau_X: \tilde X\to \tilde X$ are nontrivial deck transformations
and $\tilde i: \tilde M \to \tilde X$ is an inclusion.

%By Proposition \ref{Hatcher},  we have

%$$\beta_2(M)=\text{dim}H^2(\tilde M)^{\tau_M^*}, \ \  \beta_2(X)=\text{dim}H^2(\tilde X)^{\tau_X^*}.$$

Let $\mathbb{P}$ be a maximal collection of disjoint non-parallel two-sided $RP^2$ in $M$ (such a collection exists by an exercise in \cite{Ha2} page 12). Let $\mathbb{S}$ be the preimage of $\mathbb{P}$ in $\tilde{M}$, then each component of $\mathbb{S}$ is a $2$-sphere. Moreover, for each component $K$ of $\tilde{M}\setminus \mathbb{S}$, if we cap off each $S^2$ boundary component of $K$ by a $3$-ball to obtain $\hat{K}$, a classical argument in $3$-manifold topology implies that $\hat{K}$ is irreducible. So if we pinch each component of $\mathbb{S}$ to a point, we get a space $\tilde{X}^{(3)}$ homotopic equivalent to a one-point union of orientable irreducible $3$-manifolds and $S^1$. Then we can add cells of dimension at least $4$ to $\tilde{X}^{(3)}$ to obtain $\tilde{X}$, which is a model of $K(\tilde{\pi},1)$. Since adding cells of dimension at least $4$ does not affect $H^2$, we have an exact sequence
$$0=H^1(\mathbb{S})\to H^2(\tilde{X})=H^2(\tilde{X}^{(3)})\to H^2(\tilde{M})\to H^2(\mathbb{S})$$
which is $\mathbb Z_2$-equivariant (induced by the action $\tau_X$). Since $\mathbb Z_2$
acts on $H^2(\mathbb S)$ as multiplying by $-1$, by applying Proposition \ref{Hatcher} twice, we have
$$\beta_2(\pi)=\beta_2(X)=\text{dim}H^2(\tilde X)^{\tau_X^*}=\text{dim}H^2(\tilde M)^{\tau_M^*}=\beta_2(M).$$

%%By Case I, we have $\beta_4(\tilde X)=\beta_4(K(\tilde \pi,1)) =0$. Then  $\beta_4(X)=\beta_4(K( \pi,1)) =0$ by Proposition \ref{Hatcher} (1).

%Note $\tilde i^*:  H^2(\tilde X) \to H^2 (\tilde M)$ is injective. 
%Since  $\beta_2(K(\tilde \pi,1))=\beta_2(\tilde M)$ by Case I, we have that  $\tilde i^*:  H^2(\tilde X) \to H^2 (\tilde M)$ is an isomorphism.
%So the involutions $\tau^*_M$ and $\tau^*_X$ are conjugated by $\tilde i^*$.  Thus
%$$\tilde i^*(H^2(\tilde X)^{{\tau_X^*}})=H^2(\tilde M)^{\tau_M^*}.$$
% By Proposition \ref{Hatcher} (2), we have $$\tilde i^*(im p^*_X)=im p^*_M.$$
%Since $\tilde i^*$ is an isomorphism, $im p^*_X$ and $ im p^*_M$ have the same dimension.
%Since $p^*_X$ and $p^*_M$ are injective by Proposition \ref{Hatcher} (1), $H^2(X)$ and $H^2(M)$ have the same dimension, 
%that is $$\beta_2(G)=\beta_2(X)=\beta_2(M).$$
\end{proof}

% Then by  $\tilde i\circ \tau_ M=\tau_ X \circ \tilde i$ 

\begin{proof}[Proof of Proposition \ref{kpi1}]
Suppose the $3$-manifold $M$ has prime decomposition 
$$M=(\#_{i=1}^mM_i)\#(\#_{j=1}^n N_j)\#(\#_{l=1}^p Q_l)\#(\#_{e=1}^q S_e)$$ 
as in (\ref{1.2}).
Here each $M_i$ is a closed prime $3$-manifold with $|\pi_1(M_i)|=\infty$ and is not an $S^2$- or $RP^2$-bundle, each $N_j$ is closed and has finite fundamental group,
each $Q_l$ is prime and has non-empty  $\partial Q_l$, each $S_e$ is an $S^2$- or $RP^2$-bundle over $S^1$.  

Among the $q$ prime factors of $M$ that are $S^2$- or $RP^2$-bundles, we suppose $q_1$ of them are $S^2$-bundles and $q_2$ of them are $RP^2$-bundles. Then a $K(\pi_1(M),1)$ space can be taken to be 
$$(\vee_{i=1}^mK(\pi_1(M_i),1))\vee (\vee_{j=1}^nK(\pi_1(N_j),1))\vee (\vee_{l=1}^pK(\pi_1(Q_l),1))\vee$$
$$ (\vee^{q_1}S^1)\vee(\vee^{q_2}RP^{\infty}\times S^1).$$

We  have 
\begin{align*}
&\beta_2(\pi_1(M))=\sum_{i=1}^m\beta_2(\pi_1(M_i))+\sum_{j=1}^n\beta_2(\pi_1(N_j))+\sum_{l=1}^p\beta_2(\pi_1(Q_l))\\
=\ &\sum_{i=1}^m\beta_2(M_i)+\sum_{j=1}^n\beta_2(N_j)+\sum_{l=1}^p\beta_2(Q_l)=\sum_{i=1}^m\beta_2(M_i)+\sum_{l=1}^p\beta_2(Q_l).
\end{align*}
Here the second equality follows from Proposition \ref{kpi1prime}  and the third equality follows from the fact that $\pi_1(N_j)$ is finite.

We always can find an orientable  finite covering $\tilde M$ of $M$ such that each prime factor $\tilde M$ is either aspherical 
or $S^2\times S^1$ or has finite $\pi_1$. Then by the argument we used above, it is easy to see  $\beta_4(\pi_1(\tilde M))=0$.
Then by Proposition \ref{Hatcher} (1), $\beta_4(\pi_1( M))=0$. We have proved  (1) and  (2) of Proposition \ref{kpi1}.

Suppose that $\pi$ contains no $2$-torsions.  We conclude that each $M_i$ and $Q_l$ contains no 2-sided projective plane, 
 each $S_e$ is an $S^2$-bundle over $S^1$, and the fundamental group of each $N_j$ has odd order. As we discussed in the proof of Proposition \ref{kpi1prime}, each 
 $M_i$ and $Q_l$ is aspherical and 
$$K(\pi_1(M),1)=(\vee_{i=1}^m M_i)\vee (\vee_{j=1}^n K(\pi_1(N_j),1))\vee(\vee_{l=1}^p Q_l)\vee (\vee^{q}S^1).$$

Since $\pi_1(N_j)$ is odd, $\beta_k(\pi_1(N_j);\mathbb{Z}_2)=0$ for all $k\geq 1$ (for example, see Theorem 6.5.8 of \cite{We}). Then (3) and (4) of Proposition \ref{kpi1} follows.
\end{proof}

\section{Obstructions for  lower bounds of $\chi_4(\pi)$, $p(\pi)$ and $q^*(\pi)$}\label{secondhalf2}

Propositions \ref{Lefschetz}, \ref{main2} and \ref{euler},  the three obstruction results in this section,  are  for  lower bounds of $\chi_4(\pi)$, $p(\pi)$ and $q^*(\pi)$ respectively.  Both Proposition \ref{Lefschetz} and its proof are new, Proposition  \ref{main2} is known,
and both Proposition \ref{euler} and its proof are $\mathbb{Z}_2$-version of known results.

\begin{proposition}\label{Lefschetz}
Let $G$ be a finitely presented group, and let $\tilde{G}<G$ be an index-$2$ subgroup such that $\beta_4(\tilde{G})=0$. Then we have 
$$\chi_4(G)\geq 1-\beta_1(\tilde{G})+\beta_2(\tilde{G})+|2(\beta_1(G)-\beta_2(G))-(\beta_1(\tilde{G})-\beta_2(\tilde{G}))-1|.$$
\end{proposition}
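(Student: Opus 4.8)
The plan is to bound $\chi(X)$ from below for an arbitrary closed orientable $4$-manifold $X$ with $\pi_1(X)\cong G$, and then take the infimum. Let $\pi\colon \tilde X\to X$ be the double cover corresponding to $\tilde G<G$; since $X$ is orientable, $\tilde X$ is a closed orientable $4$-manifold with $\pi_1(\tilde X)\cong\tilde G$, and the deck involution $\tau$ is free and orientation preserving. Let $w\colon G\to\mathbb{Z}_2$ be the epimorphism with kernel $\tilde G$ and let $\mathbb{R}_w$ denote the associated flat real line bundle (local system) on $X$ and on $K(G,1)$. Over $\mathbb{R}$ the deck action splits cohomology into $\pm 1$ eigenspaces, giving the transfer decompositions $H^k(\tilde X;\mathbb{R})\cong H^k(X;\mathbb{R})\oplus H^k(X;\mathbb{R}_w)$ and $H^k(\tilde G;\mathbb{R})\cong H^k(G;\mathbb{R})\oplus H^k(G;\mathbb{R}_w)$. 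In particular $b_4(\tilde G)=0$ forces both $H^4(G;\mathbb{R})=0$ and $H^4(G;\mathbb{R}_w)=0$. Writing $b_i^-:=\dim H^i(-;\mathbb{R}_w)$ for the twisted Betti numbers, the whole proof reduces to establishing the two inequalities $b_2(X)\ge 2\,b_2(G)$ and $b_2^-(X)\ge 2\,b_2^-(G)$.

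Both inequalities I would prove by the same isotropic-subspace argument that underlies Kotschick's estimate \cite{Ko1}. For the untwisted one, the classifying map $c\colon X\to K(G,1)$ induces an injection $c^*\colon H^2(G;\mathbb{R})\hookrightarrow H^2(X;\mathbb{R})$ (the relative cells of $K(G,1)$ over $X$ have dimension $\ge 3$), and for $a,b\in H^2(G;\mathbb{R})$ one has $c^*a\cup c^*b=c^*(a\cup b)\in c^*H^4(G;\mathbb{R})=0$; hence the image is an isotropic subspace of dimension $b_2(G)$ for the (nondegenerate, by Poincaré duality) intersection form of $X$. Since any isotropic subspace $I$ of a nondegenerate form satisfies $I\subseteq I^{\perp}$, its dimension is at most half the rank, so $b_2(X)\ge 2\,b_2(G)$. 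For the twisted inequality I would run the identical argument with coefficients in $\mathbb{R}_w$: the map $c^*\colon H^2(G;\mathbb{R}_w)\hookrightarrow H^2(X;\mathbb{R}_w)$ is injective, the self-pairing now takes values in $H^4(X;\mathbb{R}_w\otimes\mathbb{R}_w)=H^4(X;\mathbb{R})\cong\mathbb{R}$ and is nondegenerate by twisted Poincaré duality (using $\mathbb{R}_w\otimes\mathbb{R}_w\cong\mathbb{R}$), and the image is isotropic because $c^*a\cup c^*b=c^*(a\cup b)\in H^4(G;\mathbb{R})=0$. The same linear-algebra fact yields $b_2^-(X)\ge 2\,b_2^-(G)$.

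It then remains to convert these two Betti-number bounds into the stated Euler-characteristic estimate. Here I would use $b_1(X)=b_1(G)$, $\chi(X)=2-2b_1(G)+b_2(X)$, and the multiplicativity $\chi(\tilde X)=2\chi(X)$ (which is exactly the vanishing Lefschetz number of the free involution $\tau$, i.e.\ $\chi^-(X)=\chi(X)$, cf.\ Lemma \ref{Euler}(iii)). Combining $\chi(\tilde X)=2\chi(X)$ with the eigenspace dimension counts $b_i(\tilde X)=b_i(X)+b_i^-(X)$, $b_i(\tilde G)=b_i(G)+b_i^-(G)$ and $b_1^-(X)=b_1(\tilde G)-b_1(G)$ expresses $b_2^-(X)$ linearly in terms of $b_2(X)$ and the group Betti numbers. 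A direct substitution then shows, writing $C:=2(b_1(G)-b_2(G))-(b_1(\tilde G)-b_2(\tilde G))-1$, that the untwisted inequality $b_2(X)\ge 2b_2(G)$ is equivalent to $\chi(X)\ge (1-b_1(\tilde G)+b_2(\tilde G))-C$ while the twisted inequality $b_2^-(X)\ge 2b_2^-(G)$ is equivalent to $\chi(X)\ge (1-b_1(\tilde G)+b_2(\tilde G))+C$; together these give $\chi(X)\ge 1-b_1(\tilde G)+b_2(\tilde G)+|C|$, which is precisely the claimed bound. Taking the infimum over all admissible $X$ completes the argument.

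The routine part is the bookkeeping in the last paragraph; the step I expect to require the most care is the twisted isotropic-subspace argument. One must be sure that the $(-1)$-eigenspace of $\tau^*$ is correctly identified with $H^*(X;\mathbb{R}_w)$, that Poincaré duality with the local system $\mathbb{R}_w$ indeed produces a nondegenerate symmetric pairing into $H^4(X;\mathbb{R})\cong\mathbb{R}$ via the canonical isomorphism $\mathbb{R}_w\otimes\mathbb{R}_w\cong\mathbb{R}$, and that cup products are natural under the classifying map $c$ for local coefficients, so that isotropy really follows from $H^4(G;\mathbb{R})=0$. Once these standard but delicate points about cohomology with local coefficients are in place, the two inequalities, and hence the theorem, follow uniformly.
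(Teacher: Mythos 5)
Your argument is correct and is essentially the paper's own proof in different packaging: your transfer decomposition $H^k(\tilde X;\mathbb{R})\cong H^k(X;\mathbb{R})\oplus H^k(X;\mathbb{R}_w)$ is exactly the paper's splitting of $H^k(\tilde X)$ into the $\pm 1$-eigenspaces of the deck transformation, your two isotropic-subspace bounds $b_2(X)\ge 2b_2(G)$ and $b_2^-(X)\ge 2b_2^-(G)$ coincide with the paper's estimates $\dim H^2(\tilde X)_\pm\ge 2\dim H^2(K(\tilde G,1))_\pm$ obtained from Lemmas \ref{+-} and \ref{algebra}, and your input $\chi(\tilde X)=2\chi(X)$ (equivalently $\chi^-(X)=\chi(X)$) is the same vanishing-Lefschetz-number fact the paper uses to determine $\Delta_+-\Delta_-$ in equation (5.6). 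Your final step, combining the two linear inequalities into the bound with $|C|$, matches the paper's inequality $\tfrac{1}{2}(\Delta_++\Delta_-)\ge\tfrac{1}{2}|\Delta_+-\Delta_-|$, so the proposal is complete with no gaps.
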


\begin{proof}
Let $X$ be a closed orientable $4$-manifold with $\pi_1(X)\cong G$, then we have a map $i:X\to K(G,1)$ that induces an isomorphism on $\pi_1$. (Here we use the fact that any compact manifold is homotopic equivalent to a CW-complex, see \cite[ Corollary A.12]{Ha1}.)

Let $p:\tilde{X}\to X$ and $q:K(\tilde{G},1)\to K(G,1)$ be the double covers of $X$ and $K(G,1)$ corresponding to $\tilde{G}<G$ respectively, we get the following commutative diagram:
\begin{diagram}
\tilde{X} & \rTo^j & K(\tilde{G},1)\\
\dTo^p & & \dTo^q\\
X& \rTo^i & K(G,1)
\end{diagram}

Since $\pi_1(\tilde{X})\cong \tilde{G}$, we have $\beta_1(\tilde{X})=\beta_1(\tilde{G})$. By Poincare duality, we have $\beta_3(\tilde{X})=\beta_1(\tilde{X})=\beta_1(\tilde{G})$. So we get 
\begin{align}\label{3.1}
\chi(\tilde{X})=2-2\beta_1(\tilde{G})+\beta_2(\tilde{X}),\ \chi(X)=1-\beta_1(\tilde{G})+\frac{1}{2}\beta_2(\tilde{X}).
\end{align}
To prove this proposition, we only need to bound $\beta_2(\tilde{X})$ from below.

Since $j:\tilde{X}\to K(\tilde{G},1)$ induces an isomorphism on $\pi_1$, $j^*:H^1(K(\tilde{G},1))\to H^1(\tilde{X})$ is an isomorphism. Since $K(\tilde{G},1)$ can be obtained  (up to homotopy equivalence) by attaching cells to $\tilde{X}$ with dimension at least $3$,  $j^*:H^2(K(\tilde{G},1))\to H^2(\tilde{X})$ is injective.

Let $\tau_X:\tilde{X}\to \tilde{X}$ and $\tau_K:K(\tilde{G},1)\to K(\tilde{G},1)$ be the nontrivial deck transformations of $p:\tilde{X}\to X$ and $q:K(\tilde{G},1)\to K(G,1)$ respectively. 
%We also have a commutatie diagram:
%\begin{diagram}
%\tilde{X} & \rTo^j & K(\tilde{G},1)\\
%\dTo^{\tau_X} & & \dTo^{\tau_K}\\
%\tilde{X}& \rTo^j & K(\tilde{G},1)
%\end{diagram}
%Since $q:K(\tilde{G},1)\to K(G,1)$ is a double cover, 
By Proposition \ref{Hatcher}, for each $n$, $q^*:H^n(K(G,1))\to H^n(K(\tilde{G},1))$ is injective, and we have $$q^*(H^n(K(G,1)))=(H^n(K(\tilde{G},1)))^{\tau_K^*}.$$
%Here $(H^n(K(\tilde{G},1)))^{\tau_K^*}$ denotes the subspace of $H^n(K(\tilde{G},1))$ consisting of elements fixed by $\tau_K^*:H^n(K(\tilde{G},1))\to H^n(K(\tilde{G},1))$.

Since $\tau_K^*:H^n(K(\tilde{G},1))\to H^n(K(\tilde{G},1))$ gives a $\mathbb{Z}_2$-action, we have 
$$H^n(K(\tilde{G},1))=H^n(K(\tilde{G},1))_+\oplus H^n(K(\tilde{G},1))_-,$$
where $H^n(K(\tilde{G},1))_+$ and $H^n(K(\tilde{G},1))_-$ denote the eigenspaces of $\tau_K^*$ corresponding to eigenvalues $1$ and $-1$ respectively. Similarly, by considering the $\mathbb{Z}_2$-action on $H^n(\tilde{X})$ given by $\tau_X^*$ and the eigenspaces corresponding to $1$ and $-1$, we have 
$$H^n(\tilde{X})=H^n(\tilde{X})_+\oplus H^n(\tilde{X})_-.$$

Then we have $$H^n(K(\tilde{G},1))_+= H^n(K(\tilde{G},1))^{\tau_K^*}=q^*(H^n(K(G,1))).$$ 
Since $q^*:H^n(K(G,1))\to H^n(K(\tilde{G},1))$ is injective, we have 
$$\text{dim}\ H^n(K(\tilde{G},1))_+=\text{dim}\ q^*(H^n(K(G,1)))=\text{dim}\ H^n(K(G,1)=\beta_n(G)$$ 
and 
$$\text{dim}\ H^n(K(\tilde{G},1))_-=\text{dim}\ H^n(K(\tilde{G},1))-\text{dim}\ H^n(K(\tilde{G},1))_+=\beta_n(\tilde{G})-\beta_n(G).$$

Since $j^*:H^1(K(\tilde{G},1))\to H^1(\tilde{X})$ is an isomorphism and commutes with the action of deck-transformation, we have 
$$\text{dim}\ H^1(\tilde{X})_+=\text{dim}\ j^*(H^1(K(\tilde{G},1))_+)=\text{dim}\ H^1(K(\tilde{G},1))_+=\beta_1(G)$$
and 
$$\text{dim}\ H^1(\tilde{X})_-=\text{dim}\ j^*(H^1(K(\tilde{G},1))_-)=\text{dim}\ H^1(K(\tilde{G},1))_-=\beta_1(\tilde{G})-\beta_1(G).$$ So we get 
\begin{align}\label{3.2}
\text{tr}(\tau_X^*:H^1(\tilde{X})\to H^1(\tilde{X}))=2\beta_1(G)-\beta_1(\tilde{G}).
\end{align}

Since $\tau_X:\tilde{X}\to \tilde{X}$ is an orientation preserving homeomorphism, for any $\alpha\in H^1(\tilde{X})$ and $\beta\in H^3(\tilde{X})$, we have $\alpha\cup \beta=\tau_X^*(\alpha)\cup \tau_X^*(\beta)\in H^4(\tilde{X})\cong \mathbb{R}$. By Poincare duality, we have $\text{dim}\ H^3(\tilde{X})_+=\text{dim}\ H^1(\tilde{X})_+=\beta_1(G)$ and  $\text{dim}\ H^3(\tilde{X})_-=\text{dim}\ H^1(\tilde{X})_-=\beta_1(\tilde{G})-\beta_1(G)$, so we get
\begin{align}\label{3.3}
\text{tr}(\tau_X^*:H^3(\tilde{X})\to H^3(\tilde{X}))=2\beta_1(G)-\beta_1(\tilde{G}).
\end{align}

By Poincare duality, the cup product $H^2(X)\times H^2(X)\to H^4(X)\cong \mathbb{R}$ is a non-singular bilinear form. 

Now we need two lemmas and the first  one is well-known.

\begin{lemma}\label{algebra}
Let $V$ be a vector space of dimension $n$ over a field $F$ and let $q$ be a non-degenerate symmetric bilinear form on $V$.
 If $q$ vanishes on a sub-space $W$ of dimension $m$, then $n\ge 2m$. 
\end{lemma}

\begin{lemma}\label{+-}
(1) The restrictions of the cup product of $H^*(\tilde{X})$ on both $H^2(\tilde{X})_+\times H^2(\tilde{X})_+$ and  $H^2(\tilde{X})_-\times H^2(\tilde{X})_-$ are non-degenerate. 

(2) The restrictions of the cup product of $H^*(\tilde{X})$ on both $j^*(H^2(K(\tilde{G},1))_+)\times j^*(H^2(K(\tilde{G},1))_+)$ and 
$j^*(H^2(K(\tilde{G},1))_-)\times j^*(H^2(K(\tilde{G},1))_-)$ are trivial.

 \end{lemma}

\begin{proof} %Since the cup product $H^2(X)\times H^2(X)\to H^4(X)\cong \mathbb{R}$ is a nonsingular bilinear form. 
%$H^2(\tilde{X})_+\times H^2(\tilde{X})_+$ 

(1) For any non-zero elements  $\alpha \in H^2(\tilde{X})_+$ and $\beta \in H^2(\tilde{X})_-$, then $\tau_X^*(\alpha)=\alpha$
and $\tau_X^*(\beta)=-\beta$, hence $$\tau_X^*(\alpha\cup \beta)=-\alpha\cup \beta.$$
However since $\tau_X$ is orientation preserving, we have  
$$\tau_X^*(\alpha\cup \beta)=\alpha\cup \beta$$
which implies that $\alpha\cup \beta=0$.

Since the cup product $H^2(\tilde{X})\times H^2(\tilde{X})\to H^4(\tilde{X})\cong \mathbb{R}$ is a non-singular bilinear form, for any nonzero $\alpha\in H^2(\tilde{X})_+$, there must be an non-zero element $\gamma \in H^2(\tilde{X})_+$ such that $\gamma\cup \alpha\ne 0$. So 
the restriction of the cup product of 
$H^*(\tilde{X})$ on $H^2(\tilde{X})_+\times H^2(\tilde{X})_+$ is non-degenerate. 
The same argument works for $H^2(\tilde{X})_-$. 

(2) Since $\beta_4(\tilde{G})=0$, the restrictions of the cup product of $H^*(K(\tilde{G},1))$ on $H^2(K(\tilde{G},1))_+\times H^2(K(\tilde{G},1))_+$ and $H^2(K(\tilde{G},1))_-\times H^2(K(\tilde{G},1))_+$ are trivial. Since $j^*(H^2(K(\tilde{G},1))_+)\subset H^2(\tilde{X})_+$ and
$j^*(H^2(K(\tilde{G},1))_-)\subset H^2(\tilde{X})_-$, the conclusion follows.
\end{proof}

By Lemmas \ref{algebra} and \ref{+-},
we have 
\begin{align}\label{3.4}
\text{dim}\ H^2(\tilde{X})_+\geq 2\ \text{dim}\ j^* (H^2(K(\tilde{G},1))_+) =2\ \text{dim}\ H^2(K(\tilde{G},1))_+=2\beta_2(G),
\end{align}
and
\begin{align}\label{3.5}
\text{dim}\ H^2(\tilde{X})_-\geq 2\  \text{dim}\ H^2(K(\tilde{G},1))_-=2\beta_2(\tilde{G})-2\beta_2(G)
\end{align}
The above inequalities imply that 
\begin{align}\label{3.6}
\text{dim}\ H^2(\tilde{X})_+=2\beta_2(G)+\Delta_+,\ \text{dim}\ H^2(\tilde{X})_-=2\beta_2(\tilde{G})-2\beta_2(G)+\Delta_-.
\end{align}

for some non-negative integers $\Delta_+,\Delta_-$.
So we have
\begin{align}\label{3.7}
\text{tr}(\tau_X^*:H^2(\tilde{X})\to H^2(\tilde{X}))=4\beta_2(G)-2\beta_2(\tilde{G})+(\Delta_+-\Delta_-)
\end{align}

Since $\tau_X:\tilde{X}\to \tilde{X}$ has no fixed-point, by the Lefschetz fixed-point theorem and equations (\ref{3.2}), (\ref{3.3}), (\ref{3.7}), we have 
\begin{align*}
&0=\sum_{i=0}^4(-1)^i\cdot \text{tr}(\tau_X^*:H^i(\tilde{X})\to H^i(\tilde{X}))\\
=\ & 1-(2\beta_1(G)-\beta_1(\tilde{G}))+(4\beta_2(G)-2\beta_2(\tilde{G})+(\Delta_+-\Delta_-))-(2\beta_1(G)-\beta_1(\tilde{G}))+1\\
=\ & 2-4(\beta_1(G)-\beta_2(G))+2(\beta_1(\tilde{G})-\beta_2(\tilde{G}))+(\Delta_+-\Delta_-).
\end{align*}
Thus 
\begin{align}\label{3.8}
\Delta_+-\Delta_-=4(\beta_1(G)-\beta_2(G))-2(\beta_1(\tilde{G})-\beta_2(\tilde{G}))-2.
\end{align}

By equation (\ref{3.1}), we have 
\begin{align*}
&\chi(X)=1-\beta_1(\tilde{G})+\frac{1}{2}(\text{dim}\ H^2(\tilde{X})_++\text{dim}\ H^2(\tilde{X})_-)\\
=\ & 1-\beta_1(\tilde{G})+\beta_2(\tilde{G})+\frac{1}{2}(\Delta_++\Delta_-)  \qquad \text{by\ (\ref{3.6})}\\
\geq \ & 1-\beta_1(\tilde{G})+\beta_2(\tilde{G})+\frac{1}{2}|\Delta_+-\Delta_-|\\
=\ & 1-\beta_1(\tilde{G})+\beta_2(\tilde{G})+|2(\beta_1(G)-\beta_2(G))-(\beta_1(\tilde{G})-\beta_2(\tilde{G}))-1| \qquad \text{by\ (\ref{3.8})}.
\end{align*}

\end{proof}

\begin{proposition}\label{main2}\cite[Theorem 4.2]{Ko1}
If $\beta_4(\pi)=0$, then
$$p(\pi)\ge  2-2\beta_1(\pi)+2\beta_2(\pi).$$
\end{proposition}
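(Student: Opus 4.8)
The plan is to bound $\chi(X)-|\sigma(X)|$ from below for every closed orientable $4$-manifold $X$ with $\pi_1(X)\cong\pi$, and then take the infimum. First I would rewrite the target quantity purely in terms of the second cohomology. Writing $b_2^{\pm}$ for the dimensions of the maximal positive/negative definite subspaces of the intersection form on $H^2(X)$, Poincar\'e duality gives $b_0(X)=b_4(X)=1$ and $b_1(X)=b_3(X)$, so
\[
\chi(X)-|\sigma(X)|=2-2b_1(X)+\big(b_2^{+}+b_2^{-}-|b_2^{+}-b_2^{-}|\big)=2-2b_1(X)+2\min(b_2^{+},b_2^{-}).
\]
Since $H_1(X;\mathbb{R})$ depends only on $\pi_1(X)$, we have $b_1(X)=\beta_1(\pi)$. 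Hence the whole problem reduces to the single inequality $\min(b_2^{+},b_2^{-})\ge\beta_2(\pi)$.

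Next I would exploit the classifying map, exactly as in the proof of Proposition \ref{Lefschetz}. Choose $i:X\to K(\pi,1)$ inducing an isomorphism on $\pi_1$; since $K(\pi,1)$ is obtained from $X$ by attaching cells of dimension $\ge 3$ up to homotopy, $i^*:H^2(K(\pi,1))\to H^2(X)$ is injective. Let $V=i^*\big(H^2(K(\pi,1))\big)\subset H^2(X)$, so $\dim V=\beta_2(\pi)$. For $\alpha,\beta\in H^2(K(\pi,1))$, naturality gives $i^*(\alpha)\cup i^*(\beta)=i^*(\alpha\cup\beta)$, and $\alpha\cup\beta\in H^4(K(\pi,1))=0$ by the hypothesis $\beta_4(\pi)=0$. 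Thus the intersection form of $X$ vanishes identically on $V$; that is, $V$ is a totally isotropic subspace of $\big(H^2(X),\cup\big)$.

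Finally I would invoke the relevant linear algebra over $\mathbb{R}$. The intersection form is non-degenerate by Poincar\'e duality, and for a non-degenerate symmetric bilinear form of signature $(b_2^{+},b_2^{-})$ the maximal dimension of a totally isotropic subspace (its Witt index) is exactly $\min(b_2^{+},b_2^{-})$. Therefore $\beta_2(\pi)=\dim V\le\min(b_2^{+},b_2^{-})$, which is precisely the inequality needed; substituting back and taking the infimum over $X$ yields $p(\pi)\ge 2-2\beta_1(\pi)+2\beta_2(\pi)$.

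The main point, and the step I expect to carry the weight, is the passage from the cup-product triviality forced by $\beta_4(\pi)=0$ to a bound on $\min(b_2^{+},b_2^{-})$ rather than merely on $b_2(X)$. Lemma \ref{algebra} alone would only give $b_2(X)\ge 2\beta_2(\pi)$, which controls $\chi_4$ but is blind to the signature; obtaining the sharper, signature-sensitive estimate requires the Witt-index refinement, namely that a maximal isotropic subspace of a real form sees $\min(b_2^{+},b_2^{-})$ and not just $\tfrac12 b_2(X)$. Everything else, the cohomological injectivity, the naturality of cup products, and the reduction in the first paragraph, is routine.
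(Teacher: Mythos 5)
Your proof is correct, and it is essentially the original argument: the paper itself offers no proof of this proposition but quotes it from \cite[Theorem 4.2]{Ko1}, and your route --- the classifying map making $i^*\big(H^2(K(\pi,1);\mathbb{R})\big)$ a totally isotropic subspace of the non-degenerate intersection form, followed by the Witt-index bound $\dim W\le\min(b_2^+,b_2^-)$ --- is exactly Kotschick's. Your closing remark is also accurate and worth keeping in mind when reading the rest of the paper: Lemma \ref{algebra}, which the authors use in Propositions \ref{Lefschetz} and \ref{euler}, only gives $b_2(X)\ge 2\beta_2(\pi)$ and hence a bound on $\chi_4(\pi)$ blind to $\sigma(X)$; the passage from $\tfrac12 b_2(X)$ to $\min(b_2^+,b_2^-)$ via the Witt index is precisely the extra ingredient that makes the estimate apply to $p(\pi)$.
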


\begin{proposition}\label{euler}
Let $M$ be a compact $3$-manifold whose fundamental group $\pi$ containing no $2$-torsion, and let $X$ be a closed $4$-manifold with $\pi_1(X)=\pi$. Then we have $$\beta_2(X; \mathbb{Z}_2)\geq 2\beta_2(\pi; \mathbb{Z}_2).$$
\end{proposition}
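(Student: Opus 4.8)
The plan is to mimic the cohomological core of the proof of Proposition \ref{Lefschetz}, but to carry it out entirely with $Z_2$ coefficients. This is what lets the argument apply to the possibly non-orientable $4$-manifold $X$ with no extra work: over $Z_2$ every closed manifold, orientable or not, satisfies Poincaré duality, so there is no need to pass to the orientation double cover or to invoke the Lefschetz fixed-point theorem as in the real-coefficient case.

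First I would fix a map $i\colon X\to K(\pi,1)$ inducing an isomorphism on $\pi_1$. Up to homotopy equivalence $K(\pi,1)$ is obtained from $X$ by attaching cells of dimension at least $3$ (to kill $\pi_2,\pi_3,\dots$), so the long exact cohomology sequence of the pair shows that $i^*\colon H^2(K(\pi,1),Z_2)\to H^2(X,Z_2)$ is injective. Writing $W=i^*\big(H^2(K(\pi,1),Z_2)\big)\subseteq H^2(X,Z_2)$, injectivity gives $\dim_{Z_2}W=\beta_2(K(\pi,1),Z_2)$.

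Next I would assemble the two inputs that make the dimension count work. Since $X$ is a closed $4$-manifold, $Z_2$-Poincaré duality makes the cup-product pairing $H^2(X,Z_2)\times H^2(X,Z_2)\to H^4(X,Z_2)\cong Z_2$ a nonsingular symmetric bilinear form. On the other hand, $\beta_4(K(\pi,1),Z_2)=0$ by Corollary \ref{kpi2}(1), so for any $\alpha,\beta\in H^2(K(\pi,1),Z_2)$ we have $\alpha\cup\beta\in H^4(K(\pi,1),Z_2)=0$, whence $i^*(\alpha)\cup i^*(\beta)=i^*(\alpha\cup\beta)=0$. Thus the nonsingular cup-product form vanishes identically on $W\times W$.

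Finally, applying Lemma \ref{algebra} to the cup-product form — the matrix argument there applies verbatim to a symmetric bilinear form vanishing on a subspace, over any field, including $Z_2$ — yields $\beta_2(X,Z_2)=\dim_{Z_2}H^2(X,Z_2)\geq 2\dim_{Z_2}W=2\beta_2(K(\pi,1),Z_2)$, as claimed. The one point that genuinely requires care, and the reason for passing to $Z_2$ coefficients in the first place, is the nonsingularity of the intersection form on $H^2(X,Z_2)$ when $X$ is non-orientable; here $Z_2$-Poincaré duality does exactly the work that the orientation-preserving hypothesis on the deck transformation did in the real-coefficient argument of Proposition \ref{Lefschetz}.
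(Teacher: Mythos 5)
Your proof is correct and takes essentially the same route as the paper: injectivity of $i^*$ on $H^2$ via the cell-attaching description of $K(\pi,1)$, vanishing of the cup product on the image using Corollary \ref{kpi2}(1), nonsingularity of the $Z_2$-intersection form on $H^2(X,Z_2)$ by Poincar\'e duality, and Lemma \ref{algebra} for the dimension count. Your closing remark about why $Z_2$ coefficients are what make the non-orientable case work is left implicit in the paper but is exactly the right observation.
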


Since the group $\pi$ of $M$  contains no $2$-torsion,  $H^4(\pi; \mathbb{Z}_2)$ is trivial by Proposition \ref{kpi1} (3). 
Then the proof  of Proposition \ref{euler} is analogue to that of that of Proposition \ref{main2} (or see directly the proof of inequality (\ref{3.4}) above)  by applying Lemma \ref{algebra} for $F=\mathbb{Z}_2$.

\section{Proofs of theorems}

We will introduce some notions and  conventions to simplify computations in our proofs. 
For each finitely presented group $G$, define
$$b(G)=\beta_2(G)-\beta_1(G), \ \ b(G; \mathbb{Z}_2)=\beta_2(G; \mathbb{Z}_2)-\beta_1(G; \mathbb{Z}_2).$$
The verification of the following fact is routine.
\begin{lemma}\label{additive}
For finitely presented group $G_1$ and $G_2$, $$b(G_1*G_2)=b(G_1)+b(G_2), \ \ b(G_1*G_2; \mathbb{Z}_2)=b(G_1; \mathbb{Z}_2)+b(G_2;  \mathbb{Z}_2).$$
\end{lemma}

{\bf  Convention(*)} Suppose $M$ is a compact 3-manifold with $\pi= \pi_1(M)$ and with prime decomposition
described  as in (\ref{1.2}) and (i)-(iv).

Note that capping each $S^2$ boundary component of $\partial M$ by a 3-ball and swapping each $S^2$-bundle factor by $D^2\times S^1$
do not affect either the group or the right sides  of formulas (\ref{1.3}), (\ref{1.4}), (\ref{1.5})  in Theorems \ref{main1.2}, \ref{main1.3},
\ref{main1.4}. So below we assume that $M$ has neither $S^2$ boundary component nor  $S^2$-bundle factor.

\subsection{$p(\pi)=\chi_4(\pi)$ when closed prime factors  of $M$ are orientable}\label{secondhalf1}

\begin{proof}[Proof of Theorem \ref{main1.3}]
Suppose $M$ is a compact 3-manifold with prime decomposition
described  as in (\ref{1.2}) and (i)-(iv).
We have 
\begin{align}\label{4.1}
2-2(p+q)+\chi(\partial M)\ge \chi_4(\pi)\ge p(\pi)\ge 2-2\beta_1(\pi)+2\beta_2(\pi)=2+2b(\pi).
\end{align}
Here the first inequality follows from  Proposition \ref{main1}, the second inequality follows from (\ref{1.1}), the third inequality follows from
 Proposition \ref{main2} and Proposition \ref{kpi1} (1). 
 
 Theorem 1.3 follows from  (\ref{4.1}) and Proposition \ref{b} below, which give the explicit value of $2b(\pi)$ in term of presentation (\ref{1.2}), 
by taking $k=0$. 
\end{proof}
 
\begin{proposition}\label{b} Suppose $M$ is a compact 3-manifold with prime decomposition
described  as in (\ref{1.2}) and (i)-(iv) and let $\pi= \pi_1(M)$.  Suppose the number of closed  non-orientable 3-manifolds $M_i$ in (i) is $k$. Then 
$$2-2\beta_1(\pi)+2\beta_2(\pi)= 2-2(k+p+q)+\chi(\partial M).$$
\end{proposition}

\begin{proof} By Convention (*),  we can rewrite $M$ as $M=P\#N\#Q\#S$  satisfying the following:
\begin{enumerate}[(i)]
\item  $P=\#_{i=1}^{m+n-k}M_i$, each $M_i$ is closed orientable and and irreducible, with either finite or infinite fundamental group;
\item $W= \#_{j=1}^kW_j$ and each $W_j$ is a closed non-orientable $3$-manifold, $W_j\ne RP^2\times S^1$; 

\item $Q=\#_{l=1}^pQ_l$ and  each $Q_l$ is a prime $3$-manifold  with non-empty $\partial Q_l$;
\item  $S=\#_{e=1}^qS_e$ and each $S_e$ is homeomorphic to  $RP^2\times S^1$. 
\end{enumerate}

For  each $M_i$, we have 
\begin{align}\label{4.2}
b(\pi_1(M_i))=\beta_2(\pi_1(M_i))-\beta_1(\pi_1(M_i))
=\beta_2(M_i)-\beta_1(M_i)=0.
\end{align}
The second equality  follows from Proposition \ref{kpi1prime} and the last equality follows from Poincare duality for closed orientable 3-manifolds.

For  each $W_j$, we have 
\begin{align}\label{4.3}
b(\pi_1(W_j))=\beta_2(\pi_1(W_j))-\beta_1(\pi_1(W_j))=\beta_2(W_j)-\beta_1(W_j)=-1.
\end{align}
The second equality  follows from Proposition \ref{kpi1prime} and the last equality follows from Euler-Poincare formula for closed non-orientable 3-manifolds.

For  each $Q_l$, we have
\begin{align}\label{4.4}
b(\pi_1(Q_l))=\beta_2(\pi_1(Q_l))-\beta_1(\pi_1(Q_l)) =\beta_2(Q_l)-\beta_1(Q_l)
=\chi(Q_l)-1=\frac{\chi(\partial Q_l)}2-1.
\end{align}
The second equality  follows from Proposition \ref{kpi1prime}, the third  equality follows from Euler-Poincare formula for compact 3-manifold
with boundary.

Since $H^*(RP^\infty \times S^1)=H^*( S^1)$, we have 
\begin{align}\label{4.5}
b(\pi_1(RP^2 \times S^1))=-1.
\end{align}

Since  $b(*)$ is additive, by (\ref{4.2})-(\ref{4.5}), we have 
\begin{align*}
&b(\pi)=\Sigma_{i=1}^m b(\pi_1(M_i))+\Sigma_{j=1}^k b(\pi_1(W_j))+\Sigma_{l=1}^p b(\pi_1(Q_l))+\Sigma_{e=1}^q b(\pi_1(S_e))\\
=\  &\Sigma_{i=1}^m0+\Sigma_{j=1}^k(-1)+\Sigma_{l=1}^p(\frac{\chi(\partial Q_i)}2-1)+\Sigma_{e=1}^q(-1)= \frac{\chi(\partial M)}2-k- p-q.
\end{align*}

So 
$$2b(\pi)= -2(k+p+q)+\chi(\partial M).$$ 
\end{proof}

\subsection{$q^*(\pi)=\chi_4(\pi)$ for $\pi$ containing no $2$-torsions}\label{wo2torsion}

\begin{proof}[Proof of Theorem \ref{main1.4}]
%Suppose $M$ is a compact 3-manifold with prime decomposition
%described  as in (\ref{1.2}) and (i)-(iv) and  $\pi= \pi_1(M)$ contains no $2$-torsion.
By Proposition \ref{main1} and   (\ref{1.1}), we have
$$2-2(p+q)+\chi(\partial M)\ge \chi_4(\pi)\ge q^*(\pi)$$
To prove Theorem \ref{main1.4}, we need only to prove that 

\begin{align}\label{4.6}
q^*(\pi) \ge 2-2(p+q)+\chi(\partial M).
\end{align}

The condition that $\pi_1(M)$ contains no $2$-torsion implies that all prime factors of $M$ with finite fundamental groups are lens spaces $L(p, q)$ with odd $p$.  Moreover $M$ contain no 2-sided $RP^2$, so each  prime factor with infinite fundamental group is aspherical.
By Convention (*), we can rewrite $M$ as 
$$M=(\#_{i=1}^mM_i)\#(\#_{j=1}^nN_j)\#(\#_{l=1}^pQ_l)$$
such that 
each $M_i$ is a closed  aspherical $3$-manifold, each $N_j$ is a lens space with odd
order group,
and   each $Q_l$ is an aspherical $3$-manifold  with $\partial Q_l\ne \emptyset$. In particular $q=0$ holds here.
%\item  $S=\#_{e=1}^qS_e$, each $S_i$ is homeomorphic to an $S^2$-bundle over $S^1$. 

Using Poincare duality and  Euler-Poincare formula in $\mathbb{Z}_2$ coefficient and Proposition \ref{kpi1prime}, do the same computation 
as in the proof of Theorem \ref{main1.3},
we have 
\begin{align}\label{4.7}
b(\pi_1(M_i); \mathbb{Z}_2)=0.
\end{align}
and 
\begin{align}\label{4.8}
b(\pi_1(Q_l); \mathbb{Z}_2)=\chi(\partial Q_l)/2-1.
\end{align}
Since each $\pi_1(N_j)=Z_{k_j}$ with $k_j$ odd, we have $\tilde H^*(K(\pi_1(N_j), 1);  \mathbb{Z}_2)=0$, and it follows immediately that 
\begin{align}\label{4.9}
b(\pi_1(N_j); \mathbb{Z}_2)=0.
\end{align}

Since $b(*; \mathbb{Z}_2)$ is additive, by (\ref{4.7})-(\ref{4.9}) we have 
$$b(\pi; \mathbb{Z}_2)=\Sigma_{i=1}^m b(\pi_1(M_i); \mathbb{Z}_2)+\Sigma_{j=1}^n b(\pi_1(N_j); \mathbb{Z}_2)+\Sigma_{l=1}^p b(\pi_1(Q_l); \mathbb{Z}_2)$$ 
$$=\Sigma_{i=1}^m 0+\Sigma_{j=1}^n0+ \Sigma_{l=1}^p(\chi(\partial Q_i)/2-1)= \chi(\partial M)/2- p$$
So
$$\beta_2(\pi; \mathbb{Z}_2)=b(\pi; \mathbb{Z}_2)+ \beta_1(\pi;  \mathbb{Z}_2)=\chi(\partial M)/2- p+ \beta_1(\pi; \mathbb{Z}_2)$$

%The proof is similar to the proof of Proposition \ref{m1}, with real coefficient replaced by $\mathbb{Z}_2$-coefficients.
Suppose $X$ is a closed 4-manifold such that $\pi_1(X)\cong\pi$. 
By Proposition \ref{euler} and Proposition \ref{kpi1} (3), we have
$$\beta_2(X; \mathbb{Z}_2)\ge 2\beta_2(\pi; \mathbb{Z}_2)=\chi(\partial M)- 2p+ 2\beta_1(\pi; \mathbb{Z}_2).$$
Then
$$\chi(X)=2-2\beta_1(X; \mathbb{Z}_2)+\beta_2(X; \mathbb{Z}_2)=2-2\beta_1(\pi; \mathbb{Z}_2)+\beta_2(X; \mathbb{Z}_2)\ge 2-2p +\chi(\partial M)$$
So 
$$q^*(\pi)\ge  2-2p+\chi (\partial M).$$
 \end{proof}

\subsection{Proof of Theorem \ref{main1.2}}

\begin{proof}[Proof of Theorem \ref{main1.2}]
 We may assume that $M$ is non-orientable, otherwise it is proved in Theorem \ref{main1.3}.
Suppose $$M=P\# N,$$
where $P$ is orientable and each prime factor  $W$ of $N$ is non-orientable and contains no 2-sided $RP^2$
unless $W=RP^2\times S^1$.

Let $\tilde M$ be the orientable double covering of $M$, let $G=\pi_1(M)$ and $\tilde G=\pi_1(\tilde M)$.
By Proposition \ref{Lefschetz} and Proposition \ref{kpi1} (1), we have 
\begin{align}\label{4.10}
\begin{split}
&\chi_4(G)\geq 1-\beta_1(\tilde{G})+\beta_2(\tilde{G})+|2(\beta_1(G)-\beta_2(G))-(\beta_1(\tilde{G})-\beta_2(\tilde{G}))-1|\\
=\ & 1+b(\tilde G)+ |-2 b(G)+b(\tilde G)-1|.
\end{split}
\end{align}

Let $\tilde N$ be the orientable double covering of $N$, and let $\bar{P}$ be the orientation reversal of $P$. Then
$$\tilde M= P\# \tilde N \#\bar{P}$$
and
$$G=\pi_1(P)*\pi_1(N), \ \ \tilde G=\pi_1(P)*\pi_1(\tilde N)*\pi_1(P).$$
By the additivity of $b(*)$, we have 
$$b(G)=b(\pi_1(P))+ b(\pi_1(N)), \ \ b(\tilde G)=2b(\pi_1(P))+b(\pi_1(\tilde N)).$$
Substitute them in to (\ref{4.10}), we have 
\begin{align}\label{4.11}
\chi_4(G)\geq 1+2b(\pi_1(P))+b(\pi_1(\tilde N))+ |-2 b(\pi_1(N))+b(\pi_1(\tilde N))-1|. 
\end{align}

Suppose $P$ has $p_1$ prime factors with boundaries.
Since $P$ is orientable, by Proposition \ref{b}, we have 
\begin{align}\label{4.12}
2b(\pi_1(P))= -2p_1+\chi(\partial P).
\end{align}

Suppose  the prime decomposition of $N$ is
$$N=(\#_{i=1}^v V_i)\#(\#_{j=1}^{p_2} Q_j)\#(\#_{e=1}^q RP^2\times S^1),$$
where each $V_i$ is closed,   aspherical, and each $Q_j$ is  aspherical with boundary.
Let $\tilde V_i$ and $\tilde Q_j$ be the orientable double covers of $V_i$ and $Q_j$ respectively. Then
$$\tilde N=(\#_{i=1}^v \tilde V_i)\#(\#_{j=1}^{p_2} \tilde Q_j)\#(\#_{e=1}^{v+p_2+2q-1} S^2\times S^1)$$
Since $\tilde N$ is orientable and $\chi(\partial \tilde N)/2=\chi(\partial  N)$, by Proposition \ref{b} we have 
\begin{align}\label{4.13}
b(\pi_1(\tilde N))= -p_2-(v+p_2+2q-1)+\chi(\partial \tilde N)/2
= -(v+2p_2+2q-1)+\chi(\partial  N).
\end{align}

By (\ref{4.3})-(\ref{4.5}), we have
\begin{align}\label{4.14}
b(\pi_1(N))=\Sigma_{i=1}^v b(V_i)+\Sigma_{j=1}^{p_2} b(Q_j)+\Sigma_{e=1}^q b(RP^2\times S^1)= -(v+p_2+q)+\chi(\partial N)/2.
\end{align}

By (\ref{4.13}) and (\ref{4.14})  we have 
\begin{align}\label{4.15} 
\begin{split}
&|-2 b(\pi_1(N))+b(\pi_1(\tilde N))-1|\\
=\ &|2(v+p_2+q)-\chi(\partial N)-(v+2p_2+2q-1)+\chi(\partial N)-1|=v
\end{split}
\end{align}

Substitute (\ref{4.12}), (\ref{4.13}) and (\ref{4.15}) into (\ref{4.10}), we have 
\begin{align}\label{4.16}
\begin{split}
&\chi_4(G)\geq 1+(-2p_1+\chi(\partial P)) +(-(v+2p_2+2q-1)+\chi(\partial  N))+v\\
=\ & 2-2(p_1+p_2)- 2q+ \chi(\partial P)+\chi(\partial  N)=2-2(p+q)+\chi(\partial  M).
\end{split}
\end{align}
Then by Proposition \ref{main1}, we have 
$$\chi_4(G)= 2-(p+q)+\chi(\partial M).$$
\end{proof}

{\bf Acknowledgments.} The referee provides insightful and efficient suggestions 
which greatly simplify the proofs and computations.
We thank the referee for the advice.

We also thank Professor Jonathan Hillman, Professor Dieter Kotschick, Professor Yang Su and Professor Shicheng Wang for their comments on literature and known results in this topic.

\end{document}